\theoremstyle{plain}
\newtheorem{prop}{Proposition}
\theoremstyle{definition}
\newtheorem{defn}{Definition}
\newtheorem{exmp}{Example}
\theoremstyle{remark}
\newtheorem{rem}{Remark}
\title{Solving square polynomial systems:\\ a practical method using Bezout matrices}
\author{Jean-Paul Cardinal\\
\texttt{cardinal@math.univ-paris13.fr}}
\begin{document}
\maketitle
\begin{abstract}
Let $f$ be a polynomial system consisting of $n$ polynomials $f_1,\cdots, f_n$ in $n$ variables $x_1,\cdots, x_n$, with coefficients in $\mathbb{Q}$ and let $\langle f\rangle$ be the ideal generated by $f$. Such a polynomial system, which has as many equations as variables is called a square system. It may be zero-dimensional, i.e the system of equations $f = 0$ has finitely many complex solutions, or equivalently the dimension of the quotient algebra $A = \mathbb{Q}[x]/\langle f\rangle$ is finite. In this case, the companion matrices of $f$ are defined as the matrices of the endomorphisms of $A$, called multiplication maps, $x_j : \left\vert
\begin{array}{c}
h \mapsto x_jh
\end{array}
\right.$, written in some basis of $A$.
We present a practical and efficient method to compute the companion matrices of $f$ in the case when the system is zero-dimensional. When it is not zero-dimensional, then the method works as well and still produces matrices having properties similar to the zero-dimensional case.
The whole method consists in matrix calculations. An experiment illustrates the method's effectiveness. 

\end{abstract}
\tableofcontents

\section{Univariate case}
\label{univariate}
We recall some well-known facts about univariate polynomials;
in this section we consider a polynomial $f = a_0x^d + \dots + a_{d-1}x + a_d \in \mathbb{Q}[x]$ in the variable $x$, with rational coefficients; we denote by $A = \mathbb{Q}[x]/\langle f \rangle$ the quotient algebra of $\mathbb{Q}[x]$ by the ideal $\langle f \rangle$, and we denote indifferently by $x$ the variable $x$, its projection on the quotient algebra $A$ and the multiplication map $x : \left\vert h \mapsto xh \right.$ defined on $A$. 
The special basis $\bold{x} = (1, x,\cdots, x^{d-1})$ of the vector space $A$ is called the {\bf monomial basis}.

\subsection{Mutiplication maps}
The multiplication map $x : \left\vert h \mapsto xh \right.$ is an endomorphism of $A$ which, when written in the monomial basis, has a matrix $X$ called the {\bf companion matrix} of $f$. 
The matrix $X$ is Hessenberg and writes
\begin{equation}
\label{compan}
X =
\begin{bmatrix}
	0 & \cdots & 0 & -a_d/a_0 \\
	1 & 0 & \cdots & -a_{d-1}/a_0 \\
	\vdots  & \ddots  & \ddots & \vdots  \\
	0 & \cdots & 1 & -a_1/a_0
\end{bmatrix}
\end{equation}
\begin{prop}
\label{compan2roots}
The characteristic polynomial of $X$ is $f$.
\end{prop}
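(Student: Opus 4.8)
The plan is to exploit the fact, already available from the construction in this section, that $X$ is precisely the matrix of the multiplication map $x : h \mapsto xh$ on $A = \mathbb{Q}[x]/\langle f\rangle$ written in the monomial basis $\mathbf{x} = (1, x, \dots, x^{d-1})$. Since the characteristic polynomial $\chi_X(t) = \det(tI_d - X)$ is automatically monic of degree $d$, what I will really show is that it equals $f/a_0$, the monic associate of $f$ (which coincides with $f$ exactly when $a_0 = 1$).

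First I would observe that $f \equiv 0$ in $A$, so the operator $x$ satisfies $f(x) = a_0 x^d + \dots + a_d = 0$ as an endomorphism of $A$; equivalently $f(X) = 0$ as a matrix identity. Dividing by $a_0$, the monic polynomial $g = f/a_0$ annihilates $X$, so the minimal polynomial $\mu_X$ of $X$ divides $g$, and in particular $\deg \mu_X \le d$.

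Next I would establish the reverse inequality $\deg \mu_X \ge d$ from the linear independence of the monomial basis: any relation $\mu(x) = 0$ in $A$ with $\mu$ of degree $< d$ would express a nontrivial linear combination of $1, x, \dots, x^{d-1}$ as the zero element, contradicting that these form a basis of $A$. Hence $\deg \mu_X = d$, and since $\mu_X$ is monic of degree $d$ and divides the monic degree-$d$ polynomial $g$, we get $\mu_X = g = f/a_0$. Because the minimal polynomial always divides the characteristic polynomial and here both are monic of degree $d$, they must coincide, yielding $\chi_X = f/a_0$ as desired.

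The step I expect to be the main (though mild) obstacle is the normalization bookkeeping: the characteristic polynomial is forced to be monic, whereas $f$ carries the leading coefficient $a_0$, so the stated identity holds literally only for the monic associate $f/a_0$. An alternative, purely computational route avoids the operator language entirely: one expands $\det(tI_d - X)$ by cofactors along the last column (or the first row) to obtain a recurrence in $d$, then verifies by induction that the result is $t^d + (a_1/a_0)t^{d-1} + \dots + a_d/a_0$; there the only delicate point is tracking the signs produced by the cofactor expansion.
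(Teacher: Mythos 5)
The paper states this proposition without proof --- it is recalled as a classical fact in the introductory univariate section --- so there is no internal argument to compare yours against. Your proof is correct and is the standard one: $f(X)=0$ because $X$ represents multiplication by $x$ on $A=\mathbb{Q}[x]/\langle f\rangle$ and $f\equiv 0$ there; the minimal polynomial cannot have degree $<d$ because $1,x,\dots,x^{d-1}$ are linearly independent in $A$; hence the minimal polynomial is the monic associate $f/a_0$, and since it divides the characteristic polynomial and both are monic of degree $d$, they coincide. Your caveat about normalization is also well taken: as literally stated the proposition holds only when $a_0=1$, since $\det(tI_d-X)$ is monic while $f$ need not be; the intended reading is clearly ``equal up to the leading coefficient,'' which is all that matters for the subsequent remark identifying the eigenvalues of $X$ with the roots of $f$ (with multiplicity). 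The alternative cofactor-expansion induction you sketch would work too, but the operator-theoretic route is cleaner and is the one consistent with the paper's viewpoint of $X$ as the matrix of a multiplication map.
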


\begin{rem}
We deduce from Proposition \ref{compan2roots} that the eigenvalues of $X$ are the roots of $f$, taking the multiplicities into account. 
Moreover, since the matrix $X$ is Hessenberg, we can use reliable techniques, like the QR algorithm, to compute its eigenvalues. 
This gives a practical and fast method to compute numerical approximations of the roots of $f$. 
\end{rem}

\begin{rem}
\label{g_1=g_2}
If $g_1, g_2$ are two polynomials of $\mathbb{Q}[x]$ that are equal modulo $f$, then $g_1(X) = g_2(X)$;
therefore, if $g \in A$, then the matrix $g(X)$ is defined without any ambiguity;
this matrix does not depend on the choice of the particular representative of $g$, and it is the matrix of the map $g:\left\vert h \mapsto gh \right.$, written in the monomial basis.
\end{rem}

\subsection{Bezout polynomials and Bezout matrices}
As we have seen, the companion matrix can be used to calculate the roots of a univariate polynomial. 
Interestingly, this can be naturally extended to zero-dimensional multivariate systems; 
if we have $n$ polynomials $f = f_1, \ldots, f_n$ in $n$ variables $x = x_1, \ldots, x_n$, then we simply define the companion matrices as the matrices of the multiplication maps $x_j : \left\vert h \mapsto x_jh \right.$ defined on the quotient algebra $ A = \mathbb{Q}[x]/ \langle f\rangle$, written in some basis of $A$. 
However, in the multivariate case, $A$ has no canonical basis, and the companion matrices do not have an obvious form. 
We can nonetheless resolve this problem by using a family of matrices, the so-called Bezout matrices, that exists both in the univariate case and in the multivariate case, and that serve as intermediate matrices to construct the companion matrices. 
Let's introduce the Bezout matrices in the case of univariate polynomials.

\begin{defn}
\label{def:bez}
Let $f \in \mathbb{Q}[x]$ be a fixed polynomial and let $g$ be any other polynomial. We introduce a new variable $y$ and define the {\bf Bezout polynomial} $\delta(g)$, or {\bf bezoutian}, as the polynomial in two variables $x, y$
$$
\delta(g) = \dfrac{f(x)g(y)-f(y)g(x)}{x-y}
$$
This polynomial is of degree $m-1$ in both variables $x, y$, where $m$ is the maximum of the degrees of $f$ and $g$.
If we write the bezoutian
\begin{equation}
\delta(g) = \sum_{\alpha,\beta = 0, \cdots, m-1} b_{\alpha\beta} x^\alpha y^\beta
\end{equation}
then the matrix of coefficients $B(g) = [b_{\alpha\beta}]$ is called the {\bf Bezout matrix}.
\end{defn}

\begin{rem}
The size of a Bezout matrix may be loosely defined; when working with several Bezout matrices, it may be desirable to pad some of them with extra columns or lines of zeros to get compatible sizes.
\end{rem}
\begin{rem}
The Bezout poynomial $\delta(g)$ and the Bezout matrix $B(g)$ satisfy the following equality
\begin{equation}
	\label{xBg}
	\delta(g) = \bold{x} B(g) \bold{y}^T
\end{equation}
where $\bold{x} = (1, x,\cdots, x^{m-1}) \in \mathbb{Q}[x]^m$ and $\bold{y} = (1, y,\cdots, y^{m-1}) \in \mathbb{Q}[y]^m$ are two vectors of monomials.
\end{rem}

\begin{exmp}
\label{exmp_1}
We choose $f = x^2 - 3x + 2$ as the fixed polynomial, and we examine the two cases $g=1$ and $g = x^3$. 
The Bezout polynomials are $\delta(1) = -3 + x + y$ and $\delta(x^3) = -2x^2 - 2xy -2y^2 + 3x^2y + 3xy^2 -x^2y^2$. 
The Bezout matrices $B(1)$ et $B(x^3)$ appear when we write  $\delta(1)$ and  $\delta(x^3)$ as double-entry arrays indexed by the monomials $1, x, x^2$ and $1, y, y^2$.
$$
\begin{array}{c|ccc}
\delta(1) & 1 & y & y^2\\
\hline
1 & -3 & 1 & 0\\
x & 1 & 0 & 0\\
x^2 & 0 & 0 & 0
\end{array}
\hspace{1cm}
\begin{array}{c|ccc}
\delta(x^3) & 1 & y & y^2\\
\hline
1 & 0 & 0 & -2\\
x & 0 & -2 & 3\\
x^2 & -2 & 3 & -1
\end{array}
$$
\end{exmp}

\begin{prop}
\label{relations_prop}
Let $f$ be a fixed polynomial and $g$ be another polynomial; if we denote by $m$ the maximum of the degrees of $f$ and $g$ and put $\bold{x} = (1, x,\cdots, x^{m-1})$, then
\begin{equation}
\label{relations}
	\bold{x}B(1)g = \bold{x}B(g)
\end{equation}
where the equality must be understood componentwise in $\mathbb{Q}[x]^m$ and modulo $f$.
\end{prop}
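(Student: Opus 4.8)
The plan is to work directly with the Bezout polynomials and to defer the passage to matrices to the very end. First I would record the two defining identities $\delta(1) = \frac{f(x)-f(y)}{x-y}$ and $\delta(g) = \frac{f(x)g(y)-f(y)g(x)}{x-y}$, and then form the combination $g(x)\delta(1) - \delta(g)$. Clearing the common denominator $x-y$, the numerator becomes
$$g(x)f(x) - g(x)f(y) - f(x)g(y) + f(y)g(x) = f(x)\bigl(g(x)-g(y)\bigr),$$
so that
$$g(x)\,\delta(1) - \delta(g) = f(x)\cdot\frac{g(x)-g(y)}{x-y}.$$
The quotient on the right is again a genuine polynomial, hence the right-hand side is divisible by $f(x)$. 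This already gives the congruence $g(x)\,\delta(1) \equiv \delta(g) \pmod{f(x)}$ in $\mathbb{Q}[x,y]$, which is the heart of the statement.

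Next I would translate this scalar congruence into the claimed vector identity. Using $\delta(1) = \bold{x}B(1)\bold{y}^T$ and $\delta(g) = \bold{x}B(g)\bold{y}^T$, after padding $B(1)$ with zeros so that both matrices are $m\times m$ and $\bold{x},\bold{y}$ have length $m$, the congruence reads $\bold{x}\bigl(g\,B(1)\bigr)\bold{y}^T \equiv \bold{x}B(g)\bold{y}^T \pmod{f(x)}$. Both sides are polynomials in $y$ of degree at most $m-1$, so I would expand each side in the basis $1, y, \dots, y^{m-1}$ and match the coefficient of each power $y^\beta$. The coefficient of $y^\beta$ on the left is the $\beta$-th entry of the row vector $\bold{x}B(1)g$, and on the right it is the $\beta$-th entry of $\bold{x}B(g)$; equating them for every $\beta$ yields precisely $\bold{x}B(1)g \equiv \bold{x}B(g)$, understood componentwise and modulo $f$.

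The remaining points are bookkeeping rather than substance. I must check degree and size compatibility: $\delta(1)$ has $y$-degree $\deg f - 1$ and the correction term $f(x)\frac{g(x)-g(y)}{x-y}$ has $y$-degree $\deg g - 1$, both at most $m-1$, so nothing leaves the span of $1,\dots,y^{m-1}$ and the coefficient-matching is legitimate; this is exactly where the remark about padding Bezout matrices to a common size is used. I should also justify that $\frac{g(x)-g(y)}{x-y}$ is a polynomial, which follows from the factorization of $x^k-y^k$, thereby legitimizing the cancellation of the denominator before reducing modulo $f(x)$. I expect this coefficient-extraction step to be the main obstacle, since it is where one must argue that a congruence between bivariate polynomials can be read off coefficient-by-coefficient in $y$ to produce the componentwise statement; the manipulation of the bezoutians themselves is routine.
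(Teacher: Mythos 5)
Your proposal is correct and follows essentially the same route as the paper: both establish the identity $g(x)\,\delta(1) - \delta(g) = f(x)\,\frac{g(x)-g(y)}{x-y}$ and then extract the coefficient of each $y^\beta$ to obtain the componentwise congruence modulo $f$. The extra bookkeeping you supply about degrees in $y$ and padding the matrices to a common size is a harmless elaboration of what the paper leaves implicit.
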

\begin{proof}
We rewrite $\delta(g)$ as
\begin{align*}
	\delta(g) & = & g(x)\dfrac{f(x)-f(y)}{x-y} - f(x)\dfrac{g(x)-g(y)}{x-y} \\ \nonumber
	\delta(g) & = & g(x)\delta(1) - f(x)\dfrac{g(x)-g(y)}{x-y}
\end{align*}
This equality holds between elements of $\mathbb{Q}[x][y]$. 
If $h\in \mathbb{Q}[x][y]$ and $\beta\in\mathbb{N}$, we denote by $h_\beta \in \mathbb{Q}[x]$ the coefficient of $y^\beta$ in the polynomial $h$; then
$$\delta(g)_\beta = g(x)\delta(1)_\beta - f(x)\left(\dfrac{g(x)-g(y)}{x-y}\right)_\beta$$
which holds in $\mathbb{Q}[x]$. Thus, we have $\delta(g)_\beta = g(x)\delta(1)_\beta$ modulo $f$; since this is true for all $\beta\in\mathbb{N}$, Equality (\ref{relations}) follows.
\end{proof}

\begin{rem}
Each column of a Bezout matrix, when left-multiplied by $\bold{x}$, is a polynomial in the variable $x$; when this does not bring to confusion, we will think of columns of a Bezout matrix as elements of $\mathbb{Q}[x]$, and lines as elements of $\mathbb{Q}[y]$.
Saying Proposition \ref{relations_prop} differently : each column of $B(1)$, when multiplied by $g$, equals the column of same index of $B(g)$, modulo $f$.
\end{rem}

\begin{exmp}
Returning to Example \ref{exmp_1}, Proposition \ref{relations_prop} says that the following equalities hold, modulo $f$
\begin{align*}
 (-3 + x)x^3 &= -2x^2, \\
 (1)x^3 &= -2x + 3x^2, \\
 (0)x^3 &= -2 + 3x - x^2
\end{align*}
\end{exmp}

\begin{rem}
If we work with lines instead of columns, then Proposition \ref{relations_prop} says that $gB(1)\bold{y}^T = B(g)\bold{y}^T$, giving equalities in $\mathbb{Q}[y]/\langle f \rangle$.
\end{rem}

\subsection{Bezout matrices are related to the companion matrix}
\label{Bar}
Given a fixed polynomial $f$ of degree $d$, the bezoutians $\delta(1)$ and $\delta(x)$ write
\begin{equation}
	\begin{array}{c|cccc}
		\delta(1) & 1 & y & \dots & y^{d-1} \\
		\hline
		1 & a_{d-1} & \ldots & \dots & a_0 \\
		x & a_{d-2} & \dots & a_0 & 0 \\
		\vdots & \vdots & \vdots & \vdots & \vdots \\
		x_{d-1} & a_0 & 0 & \ldots & 0 \\
	\end{array}
	\hspace{1.5cm}
	\begin{array}{c|cccc}
		\delta(x) & 1 & y & \dots & y^{d-1} \\
		\hline
		1 & -a_{d} & 0 & \dots & 0 \\
		x & 0 & a_{d-2} & \ldots & a_0 \\
		\vdots & \vdots & \vdots & \vdots & \vdots \\
		x_{d-1} & 0 & a_0 & \ldots & 0 \\
	\end{array}.
\end{equation}
The corresponding arrays of coefficients are the Bezout matrices $B(1)$, which is clearly invertible, and $B(x)$. 
These two matrices are specially important because they are related to the companion matrix:
\begin{prop}
\label{Barnett}
The compagnon matrix $X$ is related to the Bezout matrices $B(x), B(1)$ by the {\bf Barnett decomposition formula}
\cite{Barnett}
\begin{equation}
	X = B(x)B(1)^{-1}
\end{equation}
\end{prop}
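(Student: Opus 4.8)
The plan is to derive the formula from Proposition~\ref{relations_prop} applied to $g = x$, combined with the defining relation of the companion matrix. First I would record how $X$ acts on the monomial basis: writing $\bold{x} = (1, x, \ldots, x^{d-1})$ as a row vector, the very definition of $X$ as the matrix of multiplication-by-$x$ in this basis is equivalent to the identity
\begin{equation*}
x\,\bold{x} = \bold{x}\,X \pmod f ,
\end{equation*}
an equality of row vectors in $A^d$. Indeed, column $j$ of $X$ is by construction the coordinate vector of $x\cdot x^{j-1}$ in the basis $\bold{x}$, which says exactly that the $j$-th entry of $\bold{x}X$ equals $x^j$ modulo $f$, matching the $j$-th entry of $x\,\bold{x}$.

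Next I would specialize Proposition~\ref{relations_prop} to $g = x$ (noting that $\deg x \le d$, so $m = d$ and both $B(x)$ and $B(1)$ are $d\times d$), obtaining
\begin{equation*}
\bold{x}\,B(1)\,x = \bold{x}\,B(x) \pmod f .
\end{equation*}
Since $B(1)$ has scalar entries, multiplication by $x$ commutes through it, so the left-hand side equals $(x\,\bold{x})\,B(1)$. Substituting the companion-matrix relation from the first step rewrites this as $\bold{x}\,\big(X B(1)\big)$, and hence
\begin{equation*}
\bold{x}\,\big(X B(1)\big) = \bold{x}\,B(x) \pmod f .
\end{equation*}

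The final step, and the one requiring care, is to cancel the leading $\bold{x}$. Both $XB(1)$ and $B(x)$ are matrices with rational (scalar) entries, so each side of the last congruence is genuinely the expansion of a vector of elements of $A$ in the monomial basis. Because $1, x, \ldots, x^{d-1}$ are linearly independent in $A$, the componentwise congruence forces the two scalar matrices to agree entry by entry, giving $X B(1) = B(x)$ as an exact matrix identity. I expect this cancellation to be the main subtlety: the congruence holds only modulo $f$, and the point is precisely that the coefficients are scalars living below the degree threshold at which reduction would act, so no information is lost when passing from $A$ back to coordinates. Finally, since $B(1)$ is invertible (as observed just before the statement), right-multiplying by $B(1)^{-1}$ yields $X = B(x)B(1)^{-1}$.
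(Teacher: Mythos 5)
Your proof is correct and follows essentially the same route as the paper's: both hinge on Proposition~\ref{relations_prop} applied to $g = x$ together with the invertibility of $B(1)$, and both reduce to the exact matrix identity $XB(1) = B(x)$. The only difference is presentational --- the paper packages the final step as a change of basis through the Horner basis $\hat{\bold{x}} = \bold{x}B(1)$, whereas you cancel $\bold{x}$ directly via the linear independence of $1, x, \ldots, x^{d-1}$ in $A$.
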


\begin{proof}
Let us consider the two vectors of elements of $A$
\begin{equation}
	\begin{array}{lll}
		\bold{x}B(1) & = & (a_{d-1} + a_{d-2}x + \cdots + a_0x^{d-1}, \cdots, a_1 + a_0x,  a_0).\\
		\bold{x}B(x) & = & (-a_d, a_{d-2}x + \cdots + a_0x^{d-1}, \cdots, a_0x)
	\end{array}
\end{equation}
and put $\hat{\bold{x}} = \bold{x}B(1)$.
As $B(1)$ is invertible, $\hat{\bold{x}}$ is a basis of the vector space $A^d$, called the {\bf Horner basis}.
From Proposition \ref{relations_prop}, we have $\hat{\bold{x}}x = \bold{x}B(1)$. 
By construction, $B(1)$ is the matrix of the Horner basis $\hat{\bold{x}}$ written on the monomial basis $\bold{x}$, and $B(x)$ is the matrix of $\hat{\bold{x}}x$ written on the monomial basis;
$B(1)^{-1}B(x)$ is thus the matrix of $\hat{\bold{x}}x$ written on the Horner basis $\hat{\bold{x}}$. 
This means that the multiplication map $\left\vert h \mapsto xh\right.$ is represented in the Horner basis $\hat{\bold{x}}$ by the matrix $B(1)^{-1}B(x)$, and is therefore represented in the monomial basis $\bold{x}$ by the matrix $B(1)(B(1)^{-1}B(x))B(1)^{-1} = B(x)B(1)^{-1}$.
\end{proof}

\subsection{Barnett decomposition formula}
\label{Bar_gen}
The Barnett decomposition formula relates the companion matrix, representing the multiplication map $x$, to the Bezout matrices of the polynomials $1$ and $x$; this can be naturally extended as follows.
Let $g \in \mathbb{Q}[x]$ be any polynomial; matrix $g(X)$ is related to the Bezout matrices $B(1)$ et $B(g)$ by the so-called {\bf general Barnett decomposition formula}
\begin{equation}
	\label{GBG}
	g(X) = B(g)B(1)^{-1}
\end{equation}
given that if the sizes of $B(1)$ and $B(g)$ differ then we must transform and resize $B(g)$ according to the following procedure, explained on an example.
Formula~(\ref{GBG}) is easily checked when the degree of $g$ is smaller or equal to the degree of $f$ because $B(1)$ and $B(g)$ have the same size;
for example, if $f = x^2 - 3x + 2$, then
$$
\begin{array}{c|cc}
	\delta(1) & 1 & y \\
	\hline
	1 & -3 & 1 \\
	x & 1 & 0
\end{array}
\hspace{1cm}
\begin{array}{c|cc}
	\delta(x) & 1 & y \\
	\hline
	1 & -2 & 1 \\
	x & 1 & 0
\end{array}
\hspace{1cm}
\begin{array}{c|cc}
	\delta(x^2) & 1 & y \\
	\hline
	1 & 0 & -2 \\
	x & -2 & 3
\end{array}
$$
thus
\begin{equation}
	X = B(x)B(1)^{-1} =
	\begin{bmatrix}
		0 & -2 \\
		1 & 3
	\end{bmatrix}
	\hspace{1cm}
	X^2 = B(x^2)B(1)^{-1} =
	\begin{bmatrix}
		-3 & -6 \\
		2 & 7
	\end{bmatrix}
\end{equation}
which is consistent with formula~(\ref{GBG}).
However, if the degree $m$ of $g$ is strictly larger than the degree $d$ of $f$, then the sizes of $B(g)$ and $B(1)$ differ and the product $B(g)B(1)^{-1}$ no longer makes sense. 
This can be fixed by indexing the Bezout matrices by the same monomials, namely $\bold{x} = (1, x,\cdots, x^{m-1})$ and $\bold{y} = (1, y,\cdots, y^{m-1})$. 
For example, with $f$ as above and $g = x^3$, we have
$$
\begin{array}{c|ccc}
\delta(1) & 1 & y & y^2\\
\hline
1 & -3 & 1 & 0\\
x & 1 & 0 & 0\\
x^2 & 0 & 0 & 0
\end{array}
\hspace{1cm}
\begin{array}{c|ccc}
\delta(x^3) & 1 & y & y^2\\
\hline
1 & 0 & 0 & -2\\
x & 0 & -2 & 3\\
x^2 & -2 & 3 & -1
\end{array}
$$
In doing so, $B(1)$ is no longer invertible; the key to obtain simultaneously matrices of equal size and the invertibility of $B(1)$ is to write both bezoutians modulo $f$:
\begin{align} \nonumber 
	\delta(x^3) &=
	\begin{bmatrix}
			1 & x & x^2
	\end{bmatrix}
	\begin{bmatrix}
		0 & 0 & -2 \\
		0 & -2 & 3 \\
		-2 & 3 & -1
	\end{bmatrix}
	\begin{bmatrix}
		1 \\
		y \\
		y^2
	\end{bmatrix} \\ \nonumber 
	\delta(x^3) &=
	\begin{bmatrix}
		1 & x & x^2
	\end{bmatrix}
	\begin{bmatrix}
		1 & 0 & 2 \\
		0 & 1 & -3 \\
		0 & 0 & 1
	\end{bmatrix}
	\begin{bmatrix}
		1 & 0 & -2 \\
		0 & 1 & 3 \\
		0 & 0 & 1
	\end{bmatrix}
	\begin{bmatrix}
		0 & 0 & -2 \\
		0 & -2 & 3 \\
		-2 & 3 & -1
	\end{bmatrix}
	\begin{bmatrix}
		1 \\
		y \\
		y^2
	\end{bmatrix} \\ \nonumber 
	\delta(x^3) &=
	\begin{bmatrix}
			1 & x & 2 - 3x + x^2
	\end{bmatrix}
	\begin{bmatrix}
		4 & -6 & 0 \\
		-6 & 7 & 0 \\
		-2 & 3 & -1
	\end{bmatrix}
	\begin{bmatrix}
		1 \\
		y \\
		y^2
	\end{bmatrix} \\ \nonumber 
\end{align}
To sum up, we have post-multiplied the row vector
$\begin{bmatrix}
	1 & x & x^2
\end{bmatrix}$ by the Gauss transform
$$P =
\begin{bmatrix}
	1 & 0 & 2 \\
	0 & 1 & -3 \\
	0 & 0 & 1
\end{bmatrix}$$
and pre-multiplied $B(1)$ and $B(g)$ by $P^{-1}$; so the bezoutians write
$$
\begin{array}{c|ccc}
	\delta(1) & 1 & y & y^2\\
	\hline
	1 & -3 & 1 & 0\\
	x & 1 & 0 & 0\\
	2 - 3x + x^2 & 0 & 0 & 0
\end{array}
\hspace{1cm}
\begin{array}{c|ccc}
	\delta(x^3) & 1 & y & y^2\\
	\hline
	1 & 4 & -6 & 0 \\
	x & -6 & 7 & 0 \\
	2 - 3x + x^2 & -2 & 3 & -1
\end{array}
$$
According to the relations~(\ref{relations}) the third column of $\delta(x^3)$, which is $-2 + 3x - x^2$, is zero modulo $f$; we recognize the simple fact $-f = 0$. Thus,
\begin{align} \nonumber 
\delta(1) &= \begin{bmatrix}
	1 & x
\end{bmatrix}
\begin{bmatrix}
	-3 & 1 \\
	1 & 0
\end{bmatrix} \nonumber 
\begin{bmatrix}
	1 \\
	y
\end{bmatrix}\\
\delta(x^3) &= \begin{bmatrix}
	1 & x
\end{bmatrix}
\begin{bmatrix}
	4 & -6 \\
	-6 & 7
\end{bmatrix} \nonumber 
\begin{bmatrix}
	1 \\
	y
\end{bmatrix} + (2 - 3x + x^2)(-2 + 3y - y^2)
\end{align}
So the bezoutians, when reduced modulo $f$, write
$$
\begin{array}{c|cc}
	\delta(1) & 1 & y \\
	\hline
	1 & -3 & 1 \\
	x & 1 & 0
\end{array}
\hspace{1cm}
\begin{array}{c|cc}
	\delta(x^3) & 1 & y \\
	\hline
	1 & 4 & -6  \\
	x & -6 & 7
\end{array}
$$
We have obtained two Bezout matrices of equal size, with $B(1)$ invertible, that satisfy the general Barnett decomposition formula~(\ref{GBG}):
\begin{equation}
	B(x^3)B(1)^{-1} =
	\begin{bmatrix}
		-6 & -14 \\
		7 & 15
	\end{bmatrix}
	= X^3
\end{equation}

\begin{rem}
Instead of a Gauss matrix, we may use any matrix that maps a given column vector to a column vector containing just one non-zero entry, such as, for example, a Householder orthogonal matrix; we use Householder matrices in the practical implementation given in \cite{jp_code}.
\end{rem}

\section{Multivariate case}
\label{multivariate}

For a univariate polynomial $f$, the structure of the quotient algebra $A$ consists of the monomial basis and the companion matrix; this matrix is obtained either by a direct reading of the coefficients of $f$, or by taking the ratio of the two matrices $B(1), B(x)$; in contrast, for a multivariate polynomial system, neither a basis nor the companion matrices (matrices of the multiplication maps
$x_j : \left\vert
\begin{array}{c}
h \mapsto x_jh
\end{array}
\right.$ in the basis) can be read from the coefficients of the given polynomials. It is easy, however, to construct the Bezout matrices $B(1), B(x_1), \cdots, B(x_n)$, from which one can derive a basis of $A$ and the related companion matrices $X_1,\cdots, X_n$. 

Given $n$ polynomials $f_1,\cdots, f_n$ in $n$ variables $x_1,\cdots, x_n$, with coefficients in $\mathbb{Q}$, we denote by
\begin{itemize}
\item $\mathbb{Q}[x]$ the ring of polynomials in the variables $x = x_1,\cdots, x_n$,
\item $\langle f \rangle$ the ideal generated by $f = f_1,\cdots, f_n$,
\item $A = \mathbb{Q}[x]/\langle f\rangle$ the quotient algebra.
\end{itemize}
From now on we assume that $\langle f\rangle$ is {\bf zero-dimensional}, that is to say the vector space $A$ is finite dimensional \cite[p.~234]{clo}. This is always the case when $n = 1$.

\subsection{Construction of Bezout polynomials and Bezout matrices}

\subsubsection{Definition of Bezout polynomials and Bezout matrices in the multivariate case}

\begin{defn}
Let $x^\gamma = x_1^{\gamma_1}\cdots x_n^{\gamma_n} \in \mathbb{Q}[x]$ be some monomial.
We introduce a new variable set $y = y_1,\cdots, y_n$ and consider, for each couple of indices $i, j = 1\cdots n$, the ratio
\begin{equation}
\label{finite_diff}
\delta_{i,j}(x^\gamma) = \dfrac{y_j^{\gamma_j}f_i(y_1,\cdots, y_{j-1},x_j,\cdots,x_n) - x_j^{\gamma_j}f_i(y_1,\cdots,y_j,x_{j+1},\cdots,x_n)}{x_j - y_j}
\end{equation}
which is a polynomial in the variables $x, y$. 
With the $\delta_{i,j}$'s we form a finite difference matrix
\begin{equation}
\label{Delta}
\Delta(x^\gamma) = (\delta_{ij}(x^\gamma))_{ij}
\end{equation}
whose determinant $\delta(x^\gamma) = det(\Delta(x^\gamma))$, a polynomial in both variables $x, y$, is called the {\bf Bezout polynomial}, or {\bf bezoutian}, of $x^\gamma$.
This definition can be extended by linearity to a more general polynomial $g = \sum_\gamma g_\gamma x^\gamma \in \mathbb{Q}[x]$ by the formula
$$\delta(g) = \sum_\gamma g_\gamma \delta(x^\gamma)$$
Writting $\delta(g)$ as a sum of monomials, 
\begin{equation}
\label{def_bez}
\delta(g) = \sum_{0 \le \alpha,\beta} b_{\alpha\beta} x^\alpha y^\beta
\end{equation} 
we define $B(g)$, the {\bf Bezout matrix} of $g$, as the matrix of the coefficients $B(g) = [b_{\alpha\beta}]$. If we denote by $\bold{x}$ and $\bold{y}$ the sets of all the monomials $x^\alpha$ et $y^\beta$ that appear in~(\ref{def_bez}), then we have the relation, similar to~(\ref{xBg})
\begin{equation}
	\delta(g) = \bold{x} B(g) \bold{y}^T
\end{equation}
\end{defn}
The following example (see \cite{jpc} p.58) illustrates the previous definition

\begin{exmp}
\label{bez_multi}
We take $n = 2$, $f_1 = x_1^2 + x_1x_2^2 - 1, f_2 = x_1^2x_2 + x_1$ and want to calculate the Bezout matrices $B(1), B(x_1), B(x_2)$, which are useful for computing the companion matrices $X_1, X_2$. To begin with, we compute the finite difference matrices, as defined in (\ref{Delta})
\begin{align}
\Delta(1) &=
\begin{pmatrix}
x_1 + x_2^2 + y_1 & x_2y_1 + y_1y_2 \\
1 + x_1x_2 + x_2y_1 & y_1^2
\end{pmatrix} \nonumber  \\
\Delta(x_1) &=
\begin{pmatrix}
1 + x_1y_1 & x_2y_1 + y_1y_2 \\
1 + x_1x_2 + x_2y_1 & y_1^2
\end{pmatrix} \nonumber  \\
\Delta(x_2) &=
\begin{pmatrix}
x_1 + x_2^2 + y_1 & 1 - y_1^2 + x_2y_1y_2 \\
1 + x_1x_2 + x_2y_1  & -y_1
\end{pmatrix} \nonumber
\end{align}
whose determinants are the bezoutians
\begin{align}
\delta(1) &= -x_2y_1 - x_1x_2^2y_1 + x_1y_1^2 + y_1^3 - y_1y_2 - x_1x_2y_1y_2 - x_2y_1^2y_2 \nonumber \\
\delta(x_1) &=  y_1^2 - x_1x_2^2y_1^2 + x_1y_1^3 - x_1x_2y_1^2y_2 \nonumber \\
\delta(x_2) &= -1 - x_1x_2 - x_1y_1 -x_2y_1 - x_2^2y_1 + x_1x_2y_1^2 + x_2y_1^3 - x_2y_1y_2 - x_1x_2^2y_1y_2 - x_2^2y_1^2y_2\nonumber
\end{align}
The monomial appearing in these polynomials are
$\bold{x} = (1, x_2, x_2^2, x_1, x_1x_2, x_1x_2^2)$ and $\bold{y} = (1, y_1, y_1y_2, y_1^2, y_1^2y_2, y_1^3)$; the Bezout matrices $B(1), B(x_1), B(x_2)$ appear when we write these bezoutians as double-entry arrays indexed by $\bold{x}, \bold{y}$
$$\begin{array}{c|cccccc}
	\delta(1) & 1 & y_1 & y_1y_2 & y_1^2 & y_1^2y_2 & y_1^3 \\
	\hline
	1 &  &  & -1 &  &  & 1\\
	x_2 &  & -1 &  &  & -1 & \\
	x_2^2 &  &  &  &  &  & \\
	x_1 &  &  &  & 1 &  & \\
	x_1x_2 &  &  & -1 &  &  & \\
	x_1x_2^2 &  & -1 &  &  &  &
\end{array}$$
$$\begin{array}{c|cccccc}
	\delta(x_1) & 1 & y_1 & y_1y_2 & y_1^2 & y_1^2y_2 & y_1^3 \\
	\hline
	1 &  &  &  & 1 &  & \\
	x_2 &  &  &  &  &  & \\
	x_2^2 &  &  &  &  &  & \\
	x_1 &  &  &  &  &  & 1\\
	x_1x_2 &  &  &  &  & -1 & \\
	x_1x_2^2 &  &  &  & -1 &  &
\end{array}$$
$$\begin{array}{c|cccccc}
	\delta(x_2) & 1 & y_1 & y_1y_2 & y_1^2 & y_1^2y_2 & y_1^3 \\
	\hline
	1 & -1 &  &  &  &  & \\
	x_2 &  & -1 & -1 &  &  & 1\\
	x_2^2 &  & -1 &  &  & -1 & \\
	x_1 &  & -1 &  &  &  & \\
	x_1x_2 & -1 &  &  & 1 &  & \\
	x_1x_2^2 &  &  & -1 &  &  &
\end{array}$$

\end{exmp}

\begin{rem}
Contrasting with the univariate case, $\bold{x}$ and $\bold{y}$ are not bases of the vector space $A$. 
But, as we will see, they are  generating sets and we will examine how to build bases from them.
\end{rem}

\subsubsection{Practical computation of the Bezout matrices}
From now on, we will denote $\delta^{(k)}$ the Bezout polynomial $\delta(x_k)$ and $B^{(k)}$ the Bezout matrix $B(x_k)$; we will also adopt the convention that $x_0 = 1$. Accordingly, $\Delta(1), \Delta(x_1), \Delta(x_2)$ will be denoted by $\Delta^{(0)}, \Delta^{(1)}, \Delta^{(2)}$.

In the previous example, the matrices $\Delta^{(0)}, \Delta^{(1)}, \Delta^{(2)}$ have size $2$ and their entries are polynomials in $x_1, x_2$; calculating their determinant is easy. But, if either the number  $n$ of variables or the degree of the input polynomials $f_i$ increase, then this calculation becomes impractical because one cannot apply the Gauss pivot algorithm to a polynomial entries matrix. We can however overcome this difficulty by applying the following evaluation-interpolation process :
\begin{enumerate}
\item
Estimate a priori the set of monomials $x^\alpha y^\beta$ appearing in the Bezout polynomial $\delta(x_k)$ (for fixed $k$, $k = 0,\cdots, n$, ). Specifically, if we suppose that the polynomial system $f$ has multi-degree $(d_1, \cdots, d_n)$, that is to say, the partial degree in the variable $x_j$, for every $f_i, i = 1\cdots n$, is smaller or equal to $d_j$, then the polynomial $\delta(x_k)$ has multi-degree $(d_1, 2d_2, \cdots, nd_n)$ in variable $x$ and multi-degree $(nd_1, (n-1)d_2, \cdots, d_n)$ in variable $y$. Note that this estimate does not depend on $k = 0\cdots n$.
\item
Evaluate the polynomial-entries matrix $\Delta(x_k)$ on all pairs $(u, v)$, $u = (u_1,\cdots, u_n) \in U$, $v = (v_1,\cdots, v_n) \in V$, where $U, V$ are suitable sets of Fourier multi-points. We can choose, for instance, $U = \prod_{j=1..n} U_j$ where $U_j$ is the set of complex roots of the polynomial $X^{jd_j} - 1$, and $V = \prod_{j=1..n} V_j$ in such a way that $U_j$ et $V_j$ are disjoint sets, so that the denominator of the ratio (\ref{finite_diff}) never vanishes. This is obtained, for example, if $V_j$ is the set of complex roots of the polynomial $X^{(n-j+1)d_j} - \theta_j$ with $\theta_j = e^{i\pi/j}$ (here $i$ means the square root of $-1$). 
\item
For each $(u, v) \in U\times V$, apply the Gauss pivot method to the numerical matrix $\Delta(x_k)(u, v)$, of size $n\times n$, in order to compute its determinant $\delta(x_k)(u, v)$. This produces a matrix $C^{(k)}$, indexed by $(u, v) \in U\times V$, and defined by $C^{(k)}_{u, v} = \delta(x_k)(u, v)$.
\item
Interpolate the set of numerical values $\delta(x_k)(u, v), (u, v) \in U\times V$ in order to obtain the Bezout polynomial $\delta^{(k)}$ and the Bezout matrix $B^{(k)}$. The matrices $B^{(k)}$ and $C^{(k)}$ are closely related. Indeed, $B^{(k)} = \left[b^{(k)}_{\alpha\beta}\right]$ satisfies $\delta^{(k)}(x, y) = \sum_{\alpha,\beta} b^{(k)}_{\alpha\beta} x^\alpha y^\beta$, thus $C^{(k)}_{u,v} = \delta^{(k)}(u, v) = \sum_{\alpha,\beta} b^{(k)}_{\alpha\beta} u^\alpha v^\beta$. This writes as a matrix product $\left[C^{(k)}_{u,v}\right]_{u,v} = \left[u^\alpha\right]_{u,\alpha} \left[b^{(k)}_{\alpha,\beta}\right]_{\alpha, \beta} \left[v^\beta\right]_{v, \beta}^T$. If we define the Fourier matrices $F_u = \left[ u^\alpha \right]_{u, \alpha}$ and $F_v = \left[ v^\beta \right]_{v, \beta}$, then we get the evaluation-interpolation relation between matrices $B^{(k)}$ and $C^{(k)}$
\begin{equation}
C^{(k)} = F_uB^{(k)} F_v^T
 \end{equation}
Since $U$ and $V$ consist of Fourier points, $F_u$ et $F_v$ are (unitary) Fourier matrices and $B^{(k)}$ appears to be the 2D DFT of $C^{(k)}$
 \begin{equation}
 B^{(k)} = F_u^*C^{(k)} \overline{F_v}
 \end{equation}
\end{enumerate}
The computation of the Bezout matrices, as described above, have been implemented in Numpy and can be found at \cite{jp_code}.

\subsection{Barnett decomposition formula and structure of the quotient algebra.}
Since the ideal $\langle f\rangle$ is zero-dimensional, the dimension of the quotient algebra $A = \mathbb{Q}[\bold{x}]/\langle f\rangle$ is finite; we may look for some basis and its related companion matrices (matrices of the multiplication maps by $x_1,\cdots, x_n$). 
For this purpose, we will adapt the process described in Section \ref{Bar_gen} but, before, we will specify a number of algebraic properties about polynomial $\delta^0$ and matrices $B^{(k)}$.

\subsubsection{Algebraic properties of Bezout polynomials and Bezout matrices}
The following properties are simple; for a proof, the interested reader may refer to \cite{jpc}. As in Proposition \ref{Barnett}, we define families of elements of $A$ by forming the vector-matrix products
\begin{equation}
		\hat{\bold{x}}_k  =  \bold{x}B^{(k)}, \quad k=0\cdots n
\end{equation}
where $\bold{x}$ is the set of all the monomials $x^\alpha$ appearing in the Bezout polynomials $\delta^{(k)}, \quad k=0\cdots n$. We also write $\hat{\bold{x}}_0 = \hat{\bold{x}}$.
\begin{exmp}
Following Example \ref{bez_multi} we have
\begin{equation}
	\begin{array}{lll}
		\hat{\bold{x}} & = & (0, -x_2 - x_1x_2^2, -1 - x_1x_2, x_1, -x_2, 1) \\
		\hat{\bold{x}}_1 & = & (0, 0, 0, -1 - x_2^2, -x_1x_2, x_1) \\
		\hat{\bold{x}}_2 & = & (-1 - x_1x_2, -x_2 - x_2^2 - x_1, - x_2 - x_1x_2^2, x_1x_2, -x_2^2, x_2)
	\end{array}
\end{equation}
\end{exmp}

\begin{prop}
\label{xj} (see \cite{jpc}).
For all $k=1\cdots n$ we have, modulo $\langle f \rangle$,
\begin{equation}
    \hat{\bold{x}}x_k = \hat{\bold{x}}_k
\end{equation}
\end{prop}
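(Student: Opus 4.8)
The plan is to imitate the proof of Proposition \ref{relations_prop}, replacing the single-polynomial divided difference by the determinant $\delta = \det(\Delta)$ and exploiting multilinearity of the determinant in its columns. Since $\hat{\bold{x}}$ and $\hat{\bold{x}}_k$ are the $\bold{y}$-coordinate vectors of $\delta(1)$ and $\delta(x_k)$, namely $\hat{\bold{x}}\bold{y}^T = \delta(1)$ and $\hat{\bold{x}}_k\bold{y}^T = \delta(x_k)$, the asserted identity $\hat{\bold{x}}x_k = \hat{\bold{x}}_k \pmod{\langle f\rangle}$ is equivalent, coefficient-by-coefficient in the monomials $\bold{y}$, to the single congruence
$$x_k\,\delta(1) \equiv \delta(x_k) \pmod{\langle f\rangle},$$
understood as an equality in $\mathbb{Q}[x,y]$ modulo the ideal generated by $f_1(x),\dots,f_n(x)$. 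So it suffices to show that $x_k\,\delta(1)-\delta(x_k)$ lies in this ideal.

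First I would compare the matrices $\Delta(1)$ and $\Delta(x_k)$ of (\ref{Delta}). Reading (\ref{finite_diff}), for every index $j\neq k$ the exponent $\gamma_j$ is $0$, so $\Delta(1)$ and $\Delta(x_k)$ have identical $j$-th columns and can differ only in column $k$. By linearity of the determinant in that column, $x_k\,\delta(1)-\delta(x_k)=\det(M)$, where $M$ agrees with $\Delta(1)$ outside column $k$ and has $k$-th column equal to $x_k\,c - c'$, with $c,c'$ the $k$-th columns of $\Delta(1)$ and $\Delta(x_k)$. A direct simplification of (\ref{finite_diff}), in which the denominator $x_k-y_k$ cancels, shows that the $i$-th entry of $x_k c - c'$ is the mixed evaluation $f_i(y_1,\dots,y_{k-1},x_k,\dots,x_n)$; this cancellation is the one concrete calculation the argument really needs.

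Next I would make this mixed evaluation visible inside $\langle f\rangle$ by telescoping. Writing $g_i^{(j)} = f_i(y_1,\dots,y_j,x_{j+1},\dots,x_n)$, the $j$-th column of $\Delta(1)$ has entries $\delta_{i,j}(1)=(g_i^{(j-1)}-g_i^{(j)})/(x_j-y_j)$, so $g_i^{(j-1)}-g_i^{(j)}=(x_j-y_j)\,\delta_{i,j}(1)$. Telescoping from $g_i^{(0)}=f_i(x)$ gives
$$f_i(y_1,\dots,y_{k-1},x_k,\dots,x_n)=g_i^{(k-1)}=f_i(x)-\sum_{j=1}^{k-1}(x_j-y_j)\,\delta_{i,j}(1).$$
The subtracted sum is a $\mathbb{Q}[x,y]$-combination of the columns $j<k$ of $M$, which are exactly the columns of $\Delta(1)$; hence elementary column operations, which leave the determinant unchanged, let me replace the $k$-th column of $M$ by the constant-in-$y$ vector $(f_1(x),\dots,f_n(x))^T$. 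Expanding $\det(M)$ along this column then writes it as $\sum_i (-1)^{i+k} f_i(x)\,M_{ik}$, manifestly an element of $\langle f\rangle$, which finishes the argument.

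The only real obstacle is the bookkeeping of the two column identities: the cancellation producing the mixed evaluation $f_i(y_1,\dots,y_{k-1},x_k,\dots,x_n)$, and the telescoping that rewrites it modulo the columns of $\Delta(1)$. Once these are in place, multilinearity, invariance of the determinant under column operations, and cofactor expansion do the rest. I would also remark that nothing here uses zero-dimensionality: the congruence holds over $\mathbb{Q}[x]$ for an arbitrary square system, exactly as in the univariate Proposition \ref{relations_prop}.
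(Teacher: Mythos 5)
Your proof is correct and complete: the cancellation $x_k\,\delta_{i,k}(1)-\delta_{i,k}(x_k)=f_i(y_1,\dots,y_{k-1},x_k,\dots,x_n)$ checks out, the telescoping identity rewrites that column modulo the earlier columns of $\Delta(1)$, and the cofactor expansion exhibits $x_k\,\delta(1)-\delta(x_k)$ as an element of $\langle f_1(x),\dots,f_n(x)\rangle$ in $\mathbb{Q}[x,y]$, which passes to each $y^\beta$-coefficient exactly as in Proposition \ref{relations_prop}. The paper itself gives no proof of this statement, deferring to \cite{jpc}, so there is nothing in-paper to compare against; your argument is the natural multivariate extension of the paper's own proof of Proposition \ref{relations_prop} and matches the standard treatment in \cite{jpc} and \cite{bm}, including your closing observation that zero-dimensionality plays no role.
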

This proposition can be easily checked on Example \ref{bez_multi}. 

So far, the univariate and multivariate cases are very similar, except on one point : in the multivariate case the families $\bold{x}$ and $\hat{\bold{x}}$ are not, in general, bases of the vector space $A$. We have, however, the weaker result (see \cite{jpc}) :

\begin{prop}
Both $\bold{x}$ and $\hat{\bold{x}}$ are generating families of the vector space $A$.
\end{prop}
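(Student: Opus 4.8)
The plan is to deduce everything from a single fact about the bezoutian $\delta(1)$: that, read modulo $\langle f\rangle$ in each of its two variable sets, it is a \emph{non-degenerate} element of $A\otimes_{\mathbb{Q}} A$ (equivalently, it represents a non-degenerate bilinear pairing on $A$). First I record a reduction that leaves $\hat{\bold{x}}$ as the only case to be treated. Write $V=\mathrm{span}_{\mathbb{Q}}(\bar{\bold{x}})$ and $\hat V=\mathrm{span}_{\mathbb{Q}}(\bar{\hat{\bold{x}}})$ for the subspaces of $A$ spanned by the images of the entries of $\bold{x}$ and of $\hat{\bold{x}}$. Since $\hat{\bold{x}}=\bold{x}B^{(0)}$ and $B^{(0)}$ has rational entries, every entry of $\hat{\bold{x}}$ is a $\mathbb{Q}$-combination of the monomials in $\bold{x}$, so $\hat V\subseteq V\subseteq A$. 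Hence if I can show $\hat V=A$, then $V=A$ as well, and both families generate. It therefore suffices to prove that $\hat{\bold{x}}$ generates $A$.

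For this I would establish the \textbf{reproducing property} of the bezoutian: there exist $\mathbb{Q}$-linear forms $\ell_j$ on $\mathbb{Q}[x]$, depending only on $f$, such that for every $h\in\mathbb{Q}[x]$
$$ h \;\equiv\; \sum_j \ell_j(h)\,\hat{x}_j \pmod{\langle f\rangle}, $$
where $\delta(1)=\sum_j \hat{x}_j(x)\,y^{\beta_j}$ is the expansion obtained by reading the columns of $B^{(0)}$ against $\bold{x}$ and the monomials $y^{\beta_j}$ of $\bold{y}$. This identity says precisely that every class $\bar h\in A$ is a $\mathbb{Q}$-combination of the classes $\bar{\hat{x}}_j$, i.e. that $\hat{\bold{x}}$ spans $A$; combined with the reduction above it proves the proposition. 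The forms $\ell_j$ are the images in $A^{*}$ of the $\bold{y}$-side of $\delta(1)$ under residue (or trace) duality, so the reproducing property is exactly the assertion that, modulo $\langle f\rangle$, the tensor $\delta(1)$ is non-degenerate and that its left support is all of $A$.

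The key step, and the main obstacle, is thus the non-degeneracy of $\delta(1)$ as a pairing on $A$. In the \emph{radical} case (all common zeros simple) I would argue by interpolation: specializing the second variable set to a common root $\zeta$ turns $\delta(1)(x,\zeta)$ into a nonzero scalar multiple of the Lagrange idempotent $e_\zeta$, the scalar being (up to sign) the Jacobian $\det(\partial f_i/\partial x_j)(\zeta)$, which is nonzero precisely because $\zeta$ is a simple zero; as $\zeta$ ranges over the finitely many roots, the idempotents $e_\zeta$ form a basis of $A$ and each lies in $\hat V$, whence $\hat V=A$. The substantive difficulty is the general zero-dimensional case, where multiplicities are present and this interpolation argument degenerates. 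There I would invoke the local duality of Scheja--Storch / Grothendieck for the complete intersection $\langle f\rangle$ — note that a zero-dimensional ideal generated by $n$ polynomials in $n$ variables is automatically a complete intersection — by which the residue pairing on $A$ is non-degenerate and $\delta(1)$ is, up to normalization, its canonical representing tensor. Granting this duality, non-degeneracy, and with it the reproducing property and the proposition, follows at once; this is the non-formal ingredient deferred to \cite{jpc}, while the rest of the argument is purely formal.
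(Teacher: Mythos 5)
The paper does not actually prove this proposition: it states it with a pointer to \cite{jpc} (``The following properties are simple; for a proof, the interested reader may refer to \cite{jpc}''), so there is no in-paper argument to match yours against. Your outline is, however, structurally the standard proof that appears in the cited sources (\cite{jpc}, \cite{bm}): the reduction is correct and worth keeping --- since $\hat{\bold{x}}=\bold{x}B^{(0)}$, you have $\mathrm{span}(\hat{\bold{x}})\subseteq\mathrm{span}(\bold{x})$, so spanning for $\hat{\bold{x}}$ implies it for $\bold{x}$; the reproducing property is the right key lemma, being exactly the surjectivity of the map $A_y^{*}\to A_x$, $\ell\mapsto(\mathrm{id}\otimes\ell)(\delta(1))=\sum_j\ell(y^{\beta_j})\,\hat{x}_j$, whose image is $\mathrm{span}(\hat{\bold{x}})$; and your radical-case interpolation ($\delta(1)(x,\zeta)=J(\zeta)e_\zeta(x)$ with $J(\zeta)\neq 0$, the idempotents $e_\zeta$ forming a basis after base change to $\mathbb{C}$) is a correct and illuminating special case. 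Your aside that a zero-dimensional ideal on $n$ generators in $n$ variables is a complete intersection is also right (height $n$ plus Cohen--Macaulayness of the polynomial ring).

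The one thing to be candid about is that your argument is an outline, not a proof: the entire weight rests on the non-degeneracy of $\delta(1)$ as an element of $A\otimes A$ (equivalently the Scheja--Storch / residue duality for the complete intersection), which you explicitly defer. That is not a flaw in the plan --- it is precisely the non-formal content the paper hides behind the citation to \cite{jpc} --- but as written the proposal replaces one black box with another. For the proposition as stated you need only \emph{surjectivity} of the map above, not the full duality isomorphism; if you want a more self-contained write-up, the cleanest route is to prove the spanning statement for $\bold{x}$ and $\hat{\bold{x}}$ directly from the membership $\delta(g)(x,y)\equiv g(x)\delta(1)(x,y)\equiv g(y)\delta(1)(x,y)$ modulo $\langle f(x)\rangle+\langle f(y)\rangle$ together with a single application of the global residue (or any nonzero $A$-linear form on $A$ in the Gorenstein algebra $A$), rather than invoking the full strength of the duality theorem.
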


\subsubsection{Reduction process}
\label{sec:reduction_process}
Following the matrix handlings described in Section \ref{Bar_gen}, we will show how to compute, starting from the generating families $\bold{x}, \hat{\bold{x}}$ and the Bezout matrices $B^{(k)}$, a basis of $A$ and the companion matrices $X_k$.
Let us illustrate this process on Example \ref{bez_multi}.\\
The first column is zero in $B^{(0)}$ but not in $B^{(2)}$; this gives, modulo $\langle f \rangle$, the relation $1 + x_1x_2 = 0$. Then, we right-multiply $\bold{x}$ by the Gauss matrix $P$ whose fifth column is $(1, 0, 0, 0, 1, 0)^{T}$ and left-multiply the Bezout matrices by $P^{-1}$; the Bezout polynomials write 
$$
\begin{array}{c|cccccc}
	\delta^{(0)} & 1 & y_1 & y_1y_2 & y_1^2 & y_1^2y_2 & y_1^3 \\
	\hline
	1 &  &  &  &  &  & 1\\
	x_2 &  & -1 &  &  & -1 & \\
	x_2^2 &  &  &  &  &  & \\
	x_1 &  &  &  & 1 &  & \\
	1+x_1x_2 &  &  & -1 &  &  & \\
	x_1x_2^2 &  & -1 &  &  &  &
\end{array}$$
$$
\begin{array}{c|cccccc}
	\delta^{(1)} & 1 & y_1 & y_1y_2 & y_1^2 & y_1^2y_2 & y_1^3 \\
	\hline
	1 &  &  &  & 1 & 1 & \\
	x_2 &  &  &  &  &  & \\
	x_2^2 &  &  &  &  &  & \\
	x_1 &  &  &  &  &  & 1\\
	1+x_1x_2 &  &  &  &  & -1 & \\
	x_1x_2^2 &  &  &  & -1 &  &
\end{array}
\hspace{0.2cm}
\begin{array}{c|cccccc}
	\delta^{(2)} & 1 & y_1 & y_1y_2 & y_1^2 & y_1^2y_2 & y_1^3 \\
	\hline
	1 &  &  &  & -1 &  & \\
	x_2 &  & -1 & -1 &  &  & 1\\
	x_2^2 &  & -1 &  &  & -1 & \\
	x_1 &  & -1 &  &  &  & \\
	1+x_1x_2 & -1 &  &  & 1 &  & \\
	x_1x_2^2 &  &  & -1 &  &  &
\end{array}
$$
Since $1 + x_1x_2 = 0$, modulo $\langle f \rangle$, we can remove the fifth row in the Bezout matrices; the Bezout polynomials write
$$
\begin{array}{c|ccccc}
	\delta^{(0)} & y_1 & y_1y_2 & y_1^2 & y_1^2y_2 & y_1^3 \\
	\hline
	1  &  &  &  &  & 1 \\
	x_2  & -1 &  &  & -1 & \\
	x_2^2  &  &  &  &  & \\
	x_1  &  &  & 1 &  & \\
	x_1x_2^2  & -1 &  &  &  &
\end{array}$$
$$
\begin{array}{c|ccccc}
	\delta^{(1)}  & y_1 & y_1y_2 & y_1^2 & y_1^2y_2 & y_1^3 \\
	\hline
	1  &  &  & 1 & 1 & \\
	x_2  &  &  &  &  & \\
	x_2^2  &  &  &  &  & \\
	x_1  &  &  &  &  & 1 \\
	x_1x_2^2  &  &  & -1 &  &
\end{array}
\hspace{0.2cm}
\begin{array}{c|ccccc}
	\delta^{(2)} & y_1 & y_1y_2 & y_1^2 & y_1^2y_2 & y_1^3 \\
	\hline
	1  &  &  & -1 &  & \\
	x_2  & -1 & -1 &  &  & 1 \\
	x_2^2  & -1 &  &  & -1 & \\
	x_1  & -1 &  &  &  & \\
	x_1x_2^2 &  & -1 &  &  &
\end{array}
$$
Now, the second column is zero in $B^{(0)}$ but not in $B^{(2)}$. We have $x_2 + x_1x_2^{2} = 0$, modulo $\langle f \rangle$ . We repeat the previous step with the Gauss matrix $P$ whose fifth column is $(0, 1, 0, 0, 1)^{T}$; the Bezout polynomials write
$$
\begin{array}{c|ccccc}
	\delta^{(0)} & y_1 & y_1y_2 & y_1^2 & y_1^2y_2 & y_1^3 \\
	\hline
	1  &  &  &  &  & 1 \\
	x_2  &  &  &  & -1 & \\
	x_2^2  &  &  &  &  & \\
	x_1  &  &  & 1 &  & \\
	x_2 + x_1x_2^2  & -1 &  &  &  &
\end{array}$$
$$
\begin{array}{c|ccccc}
	\delta^{(1)}  & y_1 & y_1y_2 & y_1^2 & y_1^2y_2 & y_1^3 \\
	\hline
	1  &  &  & 1 & 1 & \\
	x_2  &  &  & 1 &  & \\
	x_2^2  &  &  &  &  & \\
	x_1  &  &  &  &  & 1 \\
	x_2 + x_1x_2^2  &  &  & -1 &  &
\end{array}
\hspace{0.2cm}
\begin{array}{c|ccccc}
	\delta^{(2)} & y_1 & y_1y_2 & y_1^2 & y_1^2y_2 & y_1^3 \\
	\hline
	1  &  &  & -1 &  & \\
	x_2  & -1 &  &  &  & 1 \\
	x_2^2  & -1 &  &  & -1 & \\
	x_1  & -1 &  &  &  & \\
	x_2 + x_1x_2^2 &  & -1 &  &  &
\end{array}
$$
Since $x_2 + x_1x_2^{2} = 0$, modulo $\langle f \rangle$, we can remove the fifth row in each Bezout matrix; the Bezout polynomials write
$$
\begin{array}{c|cccc}
	\delta^{(0)} & y_1 & y_1^2 & y_1^2y_2 & y_1^3 \\
	\hline
	1  &   &  &  & 1 \\
	x_2  &  &  & -1 & \\
	x_2^2  &  &  &  & \\
	x_1  &  & 1 &  &
\end{array}
\hspace{0.2cm}
\begin{array}{c|cccc}
	\delta^{(1)}  & y_1 & y_1^2 & y_1^2y_2 & y_1^3 \\
	\hline
	1  &  & 1 & 1 & \\
	x_2  &  & 1 &  & \\
	x_2^2  &  &  &  & \\
	x_1  &  &  &  & 1
\end{array}
\hspace{0.2cm}
\begin{array}{c|cccc}
	\delta^{(2)} & y_1 & y_1^2 & y_1^2y_2 & y_1^3 \\
	\hline
	1  &  & -1 &  & \\
	x_2  & -1 &  &  & 1 \\
	x_2^2  & -1 &  & -1 & \\
	x_1  & -1 &  &  &
\end{array}
$$
Now, the first column is zero in $B^{(0)}$ but not in $B^{(2)}$. We have $x_2 + x_2^{2} + x_1 = 0$, modulo $\langle f \rangle$. We use the Gauss matrix $P$ whose fourth column is $(0, 1, 1, 1)^{T}$; the Bezout polynomials write
$$
\begin{array}{c|cccc}
	\delta^{(0)} & y_1 & y_1^2 & y_1^2y_2 & y_1^3 \\
	\hline
	1  &   &  &  & 1 \\
	x_2  &  & -1 & -1 & \\
	x_2^2  &  & -1 &  & \\
	x_2 + x_2^{2} + x_1  &  & 1 &  &
\end{array}$$
$$
\begin{array}{c|cccc}
	\delta^{(1)} & y_1 & y_1^2 & y_1^2y_2 & y_1^3 \\
	\hline
	1  &  & 1 & 1 & \\
	x_2  &  &  &  & \\
	x_2^2  &  &  &  & -1 \\
	x_2 + x_2^{2} + x_1  &  &  &  & 1
\end{array}
\hspace{0.2cm}
\begin{array}{c|cccc}
	\delta^{(2)} & y_1 & y_1^2 & y_1^2y_2 & y_1^3 \\
	\hline
	1  &  & -1 &  & \\
	x_2  &  &  &  & 1 \\
	x_2^2  &  &  & -1 & \\
	x_2 + x_2^{2} + x_1  & -1 &  &  &
\end{array}$$

Since $x_2 + x_2^{2} + x_1 = 0$, modulo $\langle f \rangle$, we can remove the fourth row in each Bezout matrix; the Bezout polynomials write
$$
\begin{array}{c|ccc}
	\delta^{(0)} & y_1^2 & y_1^2y_2 & y_1^3 \\
	\hline
	1  &  &  & 1 \\
	x_2  & -1 & -1 & \\
	x_2^2 & -1 &  &
\end{array}
\hspace{0.2cm}
\begin{array}{c|ccc}
	\delta^{(1)} & y_1^2 & y_1^2y_2 & y_1^3 \\
	\hline
	1  & 1 & 1 & \\
	x_2  & 1 &  & -1\\
	x_2^2  &  &  & -1
\end{array}
\hspace{0.2cm}
\begin{array}{c|ccc}
	\delta^{(2)} & y_1^2 & y_1^2y_2 & y_1^3 \\
	\hline
	1  & -1 &  & \\
	x_2  &  &  & 1 \\
	x_2^2  &  & -1 &
\end{array}$$
Matrix $B^{(0)}$ is now invertible; the reduction process is completed. The dimension of $A$ is~$3$. We observe that $\bold{x} = (1, x_2, x_2^{2})$ and $\bold{y} = (y_1, y_1^{2}, y_1^{3})$ are bases of $A$; the associated Horner bases are $\hat{\bold{x}} = (-x_2-x_2^{2}, -x_{2}, 1)$ and $\hat{\bold{y}} = (y_1^{3}, -y_1^{2}-y_1^{2}y_2, -y_1^{2})$.
More generally we have (\cite{jpc} p.57, \cite{bm}, \cite{tm})
\begin{prop}
\label{conjecture}
After the reduction process described above is completed, that is to say when $B^{(0)}$ is invertible and all the matrices $B^{(k)}, k=0, \cdots, n$ have the same size, then the indexing families $\bold{x}$ and $\bold{y}$ are bases of $A$.
\end{prop}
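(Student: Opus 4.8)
The plan is to split the statement into an easy ``spanning'' half and a hard ``independence'' half, and to reduce the whole proposition to a single non-degeneracy property of the bezoutian that is classical for zero-dimensional complete intersections. The bookkeeping step is to track the reduced bezoutian $\overline{\delta^{(0)}}$, the image of $\delta^{(0)}$ in $A\otimes_{\mathbb Q}A$ obtained by reducing the $x$-monomials modulo $\langle f\rangle$ and the $y$-monomials modulo $\langle f\rangle$. Each elementary operation of the reduction leaves $\overline{\delta^{(0)}}$ unchanged: a Gauss recombination $\bold{x}\mapsto\bold{x}P$, $B^{(k)}\mapsto P^{-1}B^{(k)}$ leaves $\bold{x}B^{(0)}\bold{y}^T$ literally unchanged, while deleting a row whose $\bold{x}$-entry is $0$ in $A$, or a column that is zero in every $B^{(k)}$, deletes a term already equal to $0$ in $A\otimes A$. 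The same operations keep $\mathrm{span}_{\mathbb Q}(\bold{x})=A$ (invertible recombination, plus removal of elements equal to $0$ in $A$), so throughout the process $\bold{x}$ generates $A$ and $\dim_{\mathbb Q}A\le N$, where $N=|\bold{x}|$; by the $x\leftrightarrow y$ symmetry of the bezoutian construction the analogous statement holds for $\bold{y}$.

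The second step reduces everything to a dimension count. Since $\bold{x}$ is a generating family of cardinality $N$, it is a basis if and only if $\dim_{\mathbb Q}A=N$, and in that case the generating family $\bold{y}$, also of cardinality $N$, is automatically a basis as well. As $\dim_{\mathbb Q}A\le N$ is already in hand, the entire proposition is equivalent to the inequality $N\le\dim_{\mathbb Q}A$, i.e.\ to the linear independence of the $N$ monomials in $\bold{x}$, i.e.\ to the assertion that the relations eliminated by the process exhaust all $\mathbb Q$-linear relations among $\bold{x}$ in $A$. Here the mechanism is exactly Proposition~\ref{xj}: for any rational vector $v$ with $\hat{\bold{x}}v\equiv 0$ one has $\bold{x}B^{(k)}v=(\hat{\bold{x}}x_k)v=\hat{\bold{x}}_k v\equiv 0$, so the detected relations are precisely those of the form $\bold{x}B^{(k)}v$ with $\bold{x}B^{(0)}v\equiv 0$, and the process stops when $B^{(0)}$ has become a square invertible rational matrix.

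The crux is then the non-degeneracy of the bezoutian. The classical fact for a zero-dimensional complete intersection is that $\overline{\delta^{(0)}}$ is a non-degenerate element of $A\otimes A$: it represents the Grothendieck/residue duality $A\xrightarrow{\sim}A^{*}$, equivalently its tensor rank (the rank of the induced map $A^{*}\to A$) equals $\dim_{\mathbb Q}A$, equivalently its matrix in genuine dual bases is an invertible Gram matrix. At termination we have $\overline{\delta^{(0)}}=\sum_{\alpha,\beta}b_{\alpha\beta}\,\bar{x}^{\alpha}\otimes\bar{y}^{\beta}$ with $B^{(0)}=[b_{\alpha\beta}]$ an $N\times N$ invertible matrix and $\bold{x},\bold{y}$ both generating $A$. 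Granting the non-degeneracy, the content needed is that this terminal representation is \emph{minimal}, so that $N$ equals the tensor rank $\dim_{\mathbb Q}A$: a surviving dependence among $\bold{x}$ (which would mean $N>\dim_{\mathbb Q}A$) would exhibit $\overline{\delta^{(0)}}$ with fewer than $N$ independent left factors, and one must show this is incompatible with $B^{(0)}$ being invertible at the moment the reduction halts. This is precisely the property recorded in \cite{jpc} and resting on \cite{bm}, \cite{tm}.

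I expect this last point to be the main obstacle, and it is genuinely the only nontrivial one. The invariance of $\overline{\delta^{(0)}}$, the preservation of the spanning property, and the reduction to a dimension count are all routine; what is not routine is bridging ``$B^{(0)}$ invertible at termination'' to ``$\bold{x},\bold{y}$ are (residue-dual) bases and $N=\dim_{\mathbb Q}A$''. A self-contained argument would introduce the residue pairing $\tau\colon A\to\mathbb Q$, establish the reproducing identity $a=\sum_{\alpha}\bar{x}^{\alpha}\,\tau(c_{\alpha}a)$ exhibiting $\bold{x}$ together with its $\tau$-duals, and then prove that the relations of the form $\bold{x}B^{(k)}v$ with $\bold{x}B^{(0)}v\equiv 0$ generate \emph{all} relations once $B^{(0)}$ is invertible (completeness of the detection), thereby forcing independence. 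This duality theory is the substantive ingredient supplied by the cited references and lies outside the elementary matrix manipulations developed in the rest of the paper, so for the present exposition I would reduce to it and cite it rather than reprove it.
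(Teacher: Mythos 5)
The paper does not actually prove this proposition: it is stated with pointers to \cite{jpc} (p.~57), \cite{bm} and \cite{tm}, and the subsequent remark only comments on when the reduction terminates. So there is no in-paper argument to compare yours against step by step; your outline supplies more than the paper does, and the elementary parts of it are sound --- the invariance of the reduced bezoutian $\overline{\delta^{(0)}}\in A\otimes A$ under the Gauss recombinations and row deletions, the preservation of the spanning property of $\bold{x}$ (one only ever deletes rows labelled by elements that are $0$ in $A$), and the reduction of the whole statement to the single inequality $N\le\dim_{\mathbb{Q}}A$. Your diagnosis of where the real content lies is also accurate, and it is worth stressing that the crux is genuinely irreducible: the conclusion cannot follow formally from ``$B^{(0)}$ invertible and $\bold{x},\bold{y}$ generating,'' as the toy configuration $A=\mathbb{Q}$, $\bold{x}=\bold{y}=(1,1)$, $B^{(0)}=I_2$ shows (an invertible coefficient matrix representing a perfectly non-degenerate tensor whose factor families are dependent). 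One must use the specific duality structure of the bezoutian --- that $\overline{\delta^{(0)}}$ realizes the residue isomorphism $A^*\simeq A$ and that the representation reached by these particular moves is minimal --- and deferring that to \cite{bm} and \cite{jpc} is exactly what the paper itself does.

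One loose end in your outline deserves attention: the claim that $\bold{y}$ stays a generating family ``by the $x\leftrightarrow y$ symmetry.'' The process is not symmetric as described: columns are deleted when they become identically zero in \emph{all} the matrices $B^{(k)}$, which is not the same as deleting an element that vanishes in $A$ --- in the worked example the very first column deleted is the one indexed by the monomial $1$, which is certainly not $0$ in $A$. So the spanning property of $\bold{y}$ does not propagate by the same argument as for $\bold{x}$. It is cleaner either to observe that deleting such a column leaves the family $\hat{\bold{y}}=B^{(0)}\bold{y}^T$ untouched, so that at termination $\mathrm{span}(\bold{y})\supseteq\mathrm{span}(\hat{\bold{y}})=A$ by the invertibility of $B^{(0)}$ and the dual of the generation statement for $\hat{\bold{x}}$, or simply to note that once the duality result gives $N=\dim_{\mathbb{Q}}A$ and $\bold{x}$ a basis, the invertibility of $B^{(0)}$ forces $\bold{y}$ to be a basis as well. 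With that repair, reducing to the cited duality theorem and citing it, rather than reproving it, is the right call for this exposition.
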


\begin{rem}
Proposition \ref{conjecture} is guaranted when the ideal is zero-dimensional; in this case, to complete the reduction process we just have to use zero-columns of $B^{(0)}$ or, more generally, linear combinations of columns that vanish, i.e elements of the right kernel of $B^{(0)}$.
\end{rem}

\subsubsection{Bezout matrices are related to the companion matrices}
As in Proposition \ref{Barnett}, we consider the matrices $X_1, X_2$ defined by
\begin{equation}
	X_1 = B^{(1)}{B^{(0)}}^{-1} =
	\begin{bmatrix}
		0 & -1 & 0\\
		-1 & 0 & -1\\
		-1 & 0 & 0
	\end{bmatrix},\quad
	X_2 = B^{(2)}{B^{(0)}}^{-1} =
	\begin{bmatrix}
		0 & 0 & 1\\
		1 & 0 & 0\\
		0 & 1 & -1
	\end{bmatrix}
\end{equation}
We can see that $X_1, X_2$ are the multiplication matrices by the variables $x_1, x_2$ in the basis $\bold{x}$, i.e the companion matrices associated to the basis $\bold{x}$. More generally, we have
\begin{prop}
\label{Barnett_multi}
After the reduction process has been completed, the companion matrices $X_j$ and the Bezout matrices are related by the {\bf Barnett formulas}
\begin{equation}
	X_j = B^{(j)}{B^{(0)}}^{-1}, j = 1,\cdots, n
\end{equation}
\end{prop}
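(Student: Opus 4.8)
The plan is to reproduce, essentially word for word, the change-of-basis argument that established the univariate Barnett decomposition (Proposition~\ref{Barnett}). The reason is that, once the reduction process of Section~\ref{sec:reduction_process} has terminated, we possess exactly the three facts that made that proof work: by Proposition~\ref{conjecture} the family $\bold{x}$ is a basis of $A$; the completion condition guarantees that $B^{(0)}$ is invertible; and the multiplication relations of Proposition~\ref{xj} still hold in $A$. I would therefore fix the convention that the companion matrix $X_j$ is the matrix of the map $h\mapsto x_jh$ in the basis $\bold{x}$, so that $x_j\bold{x}=\bold{x}X_j$ in $A$, and then carry out the same linear-algebra manipulation as before.

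Concretely, I would record the two identities that are in force at the end of the reduction: the defining relation $\hat{\bold{x}}=\bold{x}B^{(0)}$ and, from Proposition~\ref{xj}, the congruence $\hat{\bold{x}}x_j=\bold{x}B^{(j)}$, both read as equalities of vectors with entries in $A$. Multiplying the first by $x_j$ and using that multiplication by $x_j$ is $\mathbb{Q}$-linear while the entries of $B^{(0)}$ are scalars, I get $x_j\hat{\bold{x}}=(x_j\bold{x})B^{(0)}=\bold{x}X_jB^{(0)}$; comparing with $x_j\hat{\bold{x}}=\hat{\bold{x}}x_j=\bold{x}B^{(j)}$ gives $\bold{x}X_jB^{(0)}=\bold{x}B^{(j)}$ in $A$. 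Since $\bold{x}$ is now a basis, the linear map sending a scalar matrix $C$ to the tuple $\bold{x}C$ is injective, so $X_jB^{(0)}=B^{(j)}$, and inverting the invertible matrix $B^{(0)}$ yields the desired formula $X_j=B^{(j)}{B^{(0)}}^{-1}$. In the geometric language of Proposition~\ref{Barnett}, this is just the statement that $B^{(0)}$ and $B^{(j)}$ are the matrices of the Horner family $\hat{\bold{x}}$ and of $\hat{\bold{x}}x_j$ in the monomial basis, so that ${B^{(0)}}^{-1}B^{(j)}$ represents multiplication by $x_j$ in the Horner basis and the conjugation $B^{(0)}({B^{(0)}}^{-1}B^{(j)}){B^{(0)}}^{-1}$ transports it back to $\bold{x}$.

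The step that genuinely requires care, and which I expect to be the main obstacle, is to verify that the congruence $\hat{\bold{x}}x_j=\bold{x}B^{(j)}$ of Proposition~\ref{xj} is preserved at every step of the reduction, since that process modifies both $\bold{x}$ and the matrices $B^{(k)}$. I would check invariance move by move. A reduction step post-multiplies $\bold{x}$ by a Gauss matrix $P$ and pre-multiplies every $B^{(k)}$ by $P^{-1}$, so each product $\bold{x}B^{(k)}$, hence each $\hat{\bold{x}}_k$, is literally unchanged, and with it the congruence. Deleting a row is legitimate exactly because that row corresponds to an entry of $\bold{x}$ that vanishes in $A$, so it contributes nothing to the products $\bold{x}B^{(k)}$ evaluated in $A$. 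Finally, deleting a zero column of $B^{(0)}$ (more generally, an element of its right kernel) removes the same-index component from $\hat{\bold{x}}$ and from each $\hat{\bold{x}}_k$, a component that vanishes in $A$ on both sides --- being the zero polynomial on the $\hat{\bold{x}}$ side and, by Proposition~\ref{xj}, equal to $0\cdot x_k=0$ on the $\hat{\bold{x}}_k$ side. Because each elementary move preserves the two identities as equalities in $A$, they survive to the terminal configuration, where Proposition~\ref{conjecture} furnishes the basis property and the invertibility of $B^{(0)}$, and the computation of the second paragraph applies verbatim for every $j=1,\dots,n$.
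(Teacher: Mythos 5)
Your proof is correct and follows exactly the change-of-basis argument the paper gives for the univariate Proposition~\ref{Barnett}; the paper states the multivariate version without proof, and your argument is the intended transcription, resting on Proposition~\ref{xj}, Proposition~\ref{conjecture} and the invertibility of $B^{(0)}$ to pass from $\bold{x}X_jB^{(0)}=\bold{x}B^{(j)}$ in $A$ to $X_j=B^{(j)}{B^{(0)}}^{-1}$. Your additional check that the identities $\hat{\bold{x}}=\bold{x}B^{(0)}$ and $\hat{\bold{x}}x_j=\bold{x}B^{(j)}$ are preserved by each reduction move (Gauss transform, deletion of a row indexed by an element vanishing in $A$, deletion of a right-kernel column) supplies the one point the paper leaves implicit, and is handled correctly.
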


\begin{rem}
As in the univariate case, we have, for all $j=1,\cdots,n$,\\
$B^{(j)}{B^{(0)}}^{-1}$ is the multiplication matrix by $x_j$ in the basis $\bold{x}$ \\
${B^{(j)}}^{T}{B^{(0)}}^{-T}$ is the multiplication matrix by $y_j$ in the basis $\bold{y}$ \\
${B^{(0)}}^{-1}{B^{(j)}}$  is the multiplication matrix by $x_j$ in the basis $\hat{\bold{x}}$ \\
${B^{(0)}}^{-T}{B^{(j)}}^{T}$  is the multiplication matrix by $y_j$ in the basis $\hat{\bold{y}}$
\end{rem}

\subsubsection{Numerical computation of the roots}
As in the univariate case, (see Proposition \ref{compan2roots}), the roots of the polynomial system$f_1, \cdots, f_n$ are the eigenvalues of the companion matrices (\cite{AS}). The eigenvalues of matrices $X_1, X_2$ above are
$$
\begin{array}{c|c}
	x_1 & x_2 \\
	\hline
	-1.32472  & 0.75488 \\
	0.66236 + 0.56228i & -0.87744 + 0.74486i \\
	0.66236 - 0.56228i & -0.87744 - 0.74486i
\end{array}
$$
Since $A$ is a commutative algebra, the matrices $X_1, X_2$ commute and have the same eigenvectors. We must be careful to sort the eigenvalues of $X_1, X_2$ so that they correspond to the same eigenvectors. In this example, it is easy to check that the couples $(x_1, x_2)$ are numerical approximations of the roots of the polynomial system $f_1 = x_1^2 + x_1x_2^2 - 1, f_2 = x_1^2x_2 + x_1$ defined in Example \ref{bez_multi}.

\subsection{Numerical experiment}
We provide a reproducible computational experiment that illustrates the effectiveness of the method presented in this article. To perform the computations, follow the steps mentioned in the README.md file of the git repository \cite{jp_code}.
In this experiment, we solve the polynomial system $f = [\\ 
6x_0^2x_1^2x_2x_3^2 + 4x_0^2x_1x_2^2x_3^2 + 9x_0x_1^2x_2^2x_3^2 - 4x_0^2x_2^2x_3^2 - 3x_0x_1x_2^2x_3^2 - x_0^2x_1x_2^2 + 5x_1^2x_2^2x_3 + 6x_0^2x_2x_3^2 - 2x_1x_2^2x_3^2 + x_0^2x_1x_2 - 9x_0x_1^2x_2 + 6x_0x_1x_2x_3 - 3x_2^2x_3^2 - 3x_0x_1^2 - x_0x_1x_2 - 6x_0x_1x_3 - 8x_0x_3^2 + 11x_1x_3^2 + 5x_2x_3^2 + 3x_0^2 - 10x_0x_1 + 8x_1x_2 + 8,\\ -10x_0^2x_1^2x_2^2x_3 + 3x_0^2x_1^2x_2^2 - x_0^2x_1x_2^2x_3 - 12x_0x_1^2x_2^2x_3 + 8x_0x_1x_2^2x_3^2 - x_0^2x_1^2x_2 + 5x_0^2x_1x_2x_3 + x_0^2x_2^2x_3 + 6x_0x_1x_2^2x_3 - 4x_0^2x_2x_3^2 + 6x_1x_2^2x_3^2 - 9x_0x_1^2x_2 + 2x_1^2x_2^2 - x_0^2x_2x_3 + x_1^2x_2x_3 + 4x_1^2x_2 - 2x_1x_2^2 - 8x_0^2x_3 + x_2^2x_3 - 7x_2x_3^2 + 3x_0x_1 - 5x_0x_3 + 10x_1,\\ 7x_0^2x_1^2x_2^2x_3^2 - 6x_0^2x_1x_2^2x_3^2 + 4x_0^2x_1^2x_2x_3 - 11x_0^2x_2^2x_3^2 - 6x_0^2x_1^2x_2 + 3x_0^2x_1x_3^2 - 4x_0x_1^2x_3^2 - 3x_0^2x_2x_3^2 + x_0x_1x_2x_3^2 - 8x_1^2x_2x_3^2 - x_0^2x_2x_3 - 11x_1x_2^2x_3 - 6x_0x_2x_3^2 + 2x_0x_1x_2 + 5x_0^2x_3 - 5x_1x_3^2 - 12x_0x_1 + 2x_1^2 - 6x_0x_2 + 5x_1x_2 + 11x_0x_3 + 8x_2x_3 + 7x_3^2 + 8x_2,\\ 8x_0^2x_1^2x_2^2x_3 - 4x_0x_1^2x_2x_3^2 + 9x_1^2x_2^2x_3^2 + 5x_0^2x_1^2x_2 - 7x_0x_1^2x_2^2 + 4x_0x_1x_2^2x_3 - 5x_1^2x_2^2x_3 + 3x_0^2x_1x_3^2 - x_0x_2^2x_3^2 - 8x_0x_1^2x_2 + x_0x_1x_2^2 + 7x_1^2x_2^2 + 2x_0x_2^2x_3 + 4x_1x_2^2x_3 + 6x_1^2x_2 - 9x_1x_2^2 - 3x_0x_2x_3 - 5x_0x_3^2 + 10x_2x_3^2 - 4x_0x_2 + x_0x_3 + 8x_1x_3 - 7x_2x_3 + 5x_1 + 10$]\\
The size of the Bezout matrix $B^{(0)}$ is $384$; this is the maximum number of solutions that a system of degree [2, 2, 2, 2] can have. To initiate the reduction process, the rank of $B^{(0)}$ is needed; as the matrix has integer coefficients, we use the Sage function matrix.kernel() to calculate its rank. After the reduction process has been completed, we find that the dimension of the quotient $A$ is $352$. The Bezout matrices have rational entries as well as the companion matrices $X_j = B^{(j)}{B^{(0)}}^{-1}$ ; computing the numerical eigenvalues of the $X_j$'s give approximations of the roots of the polynomial system~$f$.

\subsubsection{Checking the result}
\label{checking_the_result}
We check the result of the calculations by two means. 
\begin{enumerate}
\item
We compare the dimension of the quotient $A$, as provided by the Groebner basis, to the dimension provided by the Bezout process, i.e the size of the reduced Bezout matrices. If, as it is the case in this experiment, the polynomial system is in complete intersection, then the two dimensions coincide. If, on the contrary, the polynomial system is not in complete intersection, then the dimension provided by the Groebner basis is infinite whereas the dimension provided by the Bezout process is always finite.
\item
We check that $f(X) = 0$, i.e $f_i(X_1,\cdots, X_n) = 0$ for every $i = 1\cdots, n$ and where the $X_j$'s are the companion matrices. When the polynomial system is in complete intersection, this means that every $f_i$ belongs to the ideal $\langle f\rangle$. But, in practice, since the companion matrices have rational entries, the calculation of each $f_i(X_1,\cdots, X_n)$ may be too heavy. Rather, we choose to make the computations in a finite field $F$, typically $\mathbb{Z}/ p\mathbb{Z}$ with, in this experiment, $p = 2003$. Moreover, to lighten even more the computation, we choose a random vector $v\in F$ and compute only the product of each $f_i(X_1,\cdots, X_n)$ by $v$. Thus, we only need to do matrix-vector products. If at the end of the process, and for all $i= 1\cdots, n$, we have $f_i(X_1,\cdots, X_n)v = 0$ then we can conclude that, with a very high probability, $f(X) = 0$. In this experiment we know that the test was succesful because in the sage variable test\_XX, a Python list, all the entries equal True.
\item
We check the quality of the numerical roots $\alpha$ by computing the errors $Df(\alpha)^{-1}f(\alpha)$. An histogram of these errors is shown in Table \ref{tab:histo} (the errors are calculated only when the jacobian $Df(\alpha)$ is invertible).
\end{enumerate}

\begin{table}[h]
\begin{center}
\begin{tabular}{c|c}
 log10 of errors & nb of roots \\ 
 \hline
 $[-14.5, -13.9]$ & $29$\\
$[-13.9, -13.2]$ & $95$\\
$[-13.2, -12.6]$ & $82$\\
$[-12.6, -12.0]$ & $68$\\
$[-12.0, -11.3]$ & $31$\\
$[-11.3, -10.7]$ & $9$\\
$[-10.7, -10.0]$ & $18$\\
$[-10.0, -9.4]$ & $9$\\
$[-9.4, -8.8]$ & $4$\\
$[-8.8, -8.1]$ & $1$\\
$[-8.1, -7.5]$ & $4$\\
$[-6.8, -6.2]$ & $1$\\
$[-3.7, -3.0]$ & $1$

\end{tabular}
\end{center}
\caption{histogram of errors}
\label{tab:histo}
\end{table}

\subsubsection{Timings}
Table \ref{tab:timings} shows the timings of the Bezout computations as compared to the timings of the Groebner computations.
\begin{table}[h]
\begin{center}
\begin{tabular}{llllr}
 Method & Computation & Software & Arithmetic & Timing \\ \hline
   \multirow{4}{*}{Bezout} & Bezout matrices & NumPy & floating point & $2.6907$ s \\
   & rank of $B(1)$ via rref()  & Sage & integer & $0.1208$ s \\
   & reduction process & Sage & integer & $7.8421$ s \\
   & eigenvalues & SciPy & floating point & $3.4766$ s \\ 
   \hline
   \hline
   Groebner & Groebner basis & Sage & integer & $1923.9460$ s
\end{tabular}
\end{center}
\caption{timings}
\label{tab:timings}
\end{table}

\subsubsection{Disk space usage}
Table \ref{tab:sizes} shows the disk space used by the Bezout matrices (after the reduction process; only the non-zero entries are taken in account) as compared to the disk space used by the Groebner basis.
\begin{table}[h]
\begin{center}
\begin{tabular}{llc}
 Method & Output & Disk space usage \\ \hline
   \multirow{2}{*}{Bezout} & reduced Bezout matrices & $1.5240$ Mo\\
   & (non-zero entries only) & \\
   \hline
   \hline
   Groebner & Groebner basis & $156.4239$ Mo
\end{tabular}
\end{center}
\caption{disk space usage}
\label{tab:sizes}
\end{table}

\subsection{What happens when the polynomial system is not zero-dimensional ?}
It may happen that the square polynomial system is not zero-dimensional ; the dimension of $A$ is then infinite and this is indicated by the Groebner calculation. The Bezout process, no matter the dimension of $A$ is finite or infinite, always produces matrices $B^{(j)}$ of finite size ; in the case when the dimension is infinite, we still call the matrices $X_j = B^{(j)}{B^{(0)}}^{-1}$ companion matrices.
What we observe in our experiments is that, whenever the dimension of $A$ is infinite,  we still have $f(X) = 0$ (item 2 of \ref{checking_the_result}) and the eigenvalues of the companion matrices are still roots of $f$ (item 3 of \ref{checking_the_result}).


\begin{thebibliography}{00}

  \bibitem{AS}
  {W. Auzinger, H. J. Stetter},
  {An elimination algorithm for the computation of all zeros of a system of multivariate polynomial equations},
  {Numerical mathematics, Singapore 1998, ISNM vol. 86, Birkhauser, pp. 11-30}

  \bibitem{Barnett}
  {S. Barnett}, {A note on the Bezoutian matrix},
  {SIAM J. Appl. Math., 22:84-86}, {1972}

  \bibitem{Golub}
  {G.H. Golub, C.F. van Loan},
  {Matrix Computations}, {The Johns Hopkins University Press}, {1989}.

  \bibitem{jpc}
  {J.P. Cardinal}, 
  {Dualit\'e et algorithmes it\'eratifs pour la r\'esolution de syst\`emes polynomiaux}, 
  {Th\`ese pr\'esent\'ee devant l'universit\'e de Rennes I}, {1993}.

  \bibitem{jpc_french}
  {J.P. Cardinal}, 
  {R\'esolution des syst\`emes polynomiaux: un solveur bas\'e sur les matrices de Bezout},
  {arXiv:1609.09792}
 	
  \bibitem{bm}
  {B. Mourrain}, {Bezoutian and quotient ring structure},
  {J. of Symbolic Comput., 39:397-415}, {2005}.

  \bibitem{tm}
  {T. Mora}, {Solving polynomial equation systems}, {Cambridge University Press}, {2015}

  \bibitem{clo}
  {D. Cox, J. Little, D. O'Shea},
  {Ideals, Varieties and Algorithms}, {Springer}, {2006}

  \bibitem{jp_code}
  \url{https://github.com/jpcp13/bezout}

  \end{thebibliography}
\end{document}